\newtheorem{thm}{Theorem}
\newdefinition{dfn}{Definition}
\newdefinition{ex}{Example}
\newproof{proof}{Proof}
\newtheorem{cor}{Corollary}
\newtheorem{lem}{Lemma}
\begin{document}

\begin{frontmatter}



\title{Green's function for Sturm-Liouville problem}


\author[rvt]{O. Sh. Mukhtarov}
\ead{omukhtarov@yahoo.com} \cortext[cor1]{Corresponding Author (Tel:
+90 356 252 16 16, Fax: +90 356 252 15 85)}
\author[rvt]{K. Aydemir\corref{cor1}}
\ead{kadriye.aydemir@gop.edu.tr}

\address[rvt]{Department of Mathematics, Faculty of Arts and Science, Gaziosmanpa\c{s}a University,\\
 60250 Tokat, Turkey}

\begin{abstract}
The purpose of this study is to investigate a new class of boundary
value transmission problems (BVTP's) for Sturm-Liouville equation on
two separate intervals. We introduce modified inner product in
direct sum space $L_{2}[a,c)\oplus L_{2}(c,b]\oplus\mathbb{C}^{2}$
and define symmetric linear operator  in it such a way that the
considered problem  can be interpreted as an eigenvalue problem of
this operator. Then by suggesting an own approaches we construct
Green's function for problem under consideration and find the
resolvent function for corresponding inhomogeneous problem.
\end{abstract}

\begin{keyword}
Sturm-Liouville problems, Green's function, transmission conditions,
resolvent operator. \vskip0.3cm\textbf{AMS subject classifications}
: 34B24, 34B27


\end{keyword}

\end{frontmatter}


\section{Introduction}

Many interesting applications of Sturm-Liouville  theory arise in
quantum mechanics. Boundary value problems can be investigate also
through the methods of Green's function and eigenfunction expansion.
The main tool for solvability analysis of such problems is the
concept of Green's function. The concept of Green's functions is
very close to physical intuition (see \cite{du}). If one knows the
Green's function of a problem one can write down its solution in
closed form as linear combinations of integrals involving the
Green's function and the functions appearing in the inhomogeneities.
Green's functions can often be found in an explicit way, and in
these cases it is very efficient to solve the problem in this way.
Determination of Green's functions is also possible using
Sturm-Liouville theory. This leads to series representation of
Green's functions (see \cite{lev}).
\section{Statement of the problem}
In this study we shall investigate a new class  of BVP's which
consist of the Sturm-Liouville equation
\begin{equation}\label{1}
\mathcal{\ell}(y):=-p(x)y^{\prime \prime }(x)+ q(x)y(x)=\lambda y(x)
\end{equation}
to hold in finite interval $[a, b]$ except at one inner point $c \in
(a, b)$, where discontinuity in $u  \ \textrm{and} \ u'$   are
prescribed by  the transmission conditions at interior point $x=c$
\begin{equation}\label{4}
V_{j}(y):=\beta^{-}_{j1}y'(c-)+\beta^{-}_{j0}y(c-)+\beta^{+}_{j1}y'(c+)+\beta^{+}_{j0}y(c+)=0,
\ \ j=1,2
\end{equation}
together with eigenparameter-dependent boundary conditions at end
 points $x=a, b$
\begin{equation}\label{2}
V_{1}(y):=\alpha_{10}y(a)-\alpha_{11}y'(a)-\lambda(\alpha'_{10}y(a)-\alpha'_{11}y'(a))=0,
\end{equation}
\begin{equation}\label{3}
V_{2}(y):=\alpha_{20}y(b)-\alpha_{21}y'(b)+\lambda(\alpha'_{20}y(b)-\alpha'_{21}y'(b))=0,
\end{equation}
where $p(x)=p^{-}>0 \ \textrm{for} \ x \in [a, c) $, $p(x)=p^{+}>0 \
\textrm{for} \ x \in (c, b], $ the potential $q(x)$ is real-valued
function which continuous in each of the intervals $[a, c) \
\textrm{and} \ (c, b]$, and has a finite limits $q( c\mp0)$,
$\lambda$ \ is a complex spectral parameter, \ $\alpha_{ij}, \ \
\beta^{\pm}_{ij}, \ \alpha'_{ij} \ (i=1,2 \ \textrm{and} \ j=0,1)$
are real numbers. We want emphasize that the boundary value problem
studied here differs from the standard boundary value problems in
that it contains transmission conditions and the
eigenvalue-parameter appears not only in the differential equation,
but also in the boundary conditions. Moreover the coefficient
functions may have discontinuity at one interior point. Naturally,
eigenfunctions of this problem may have discontinuity at the one
inner point of the considered interval. The problems with
transmission conditions has become an important area of research in
recent years because of the needs of modern technology, engineering
and physics. Many of the mathematical problems encountered in the
study of boundary-value-transmission problem cannot be treated with
the usual techniques within the standard framework of  boundary
value problem (see \cite{ji}). Note that some special cases of this
problem arise after an application of the method of separation of
variables to a varied assortment of physical problems. For example,
some boundary value problems with transmission conditions arise in
heat and mass transfer problems \cite{lik}, in vibrating string
problems when the string loaded additionally with point masses
\cite{tik}, in diffraction problems \cite{voito}. Such properties,
as isomorphism, coerciveness with respect to the spectral parameter,
completeness and Abel bases of a system of root functions of the
similar boundary value problems with transmission conditions and its
applications to the corresponding initial boundary value problems
for parabolic equations have been investigated in \cite{osh2, osh3,
rasu1}. Also some problems with transmission conditions which arise
in mechanics (thermal conduction problems for a thin laminated
plate) were studied in  \cite{tite}.
\section{The ,,basic'' solutions and characteristic function}
 With a view to constructing the
characteristic function $\omega(\lambda )$  we shall define  two
basic solution $\varphi^{-}(x,\lambda) \ \textrm{and}  \
\psi^{-}(x,\lambda)$ on the left interval [a,c) and two basic
solution $\varphi^{+}(x,\lambda) \ \textrm{and}  \
\psi^{+}(x,\lambda)$ on the right interval (c,b] by special
procedure. Let $\varphi^{-}(x,\lambda) \ \textrm{and}  \
\psi^{+}(x,\lambda)$ be solutions of the equation $(\ref{1})$ on
[a,c) and (c,b] satisfying initial conditions
\begin{eqnarray}&&\label{7}
\varphi^{-}(a,\lambda)=\alpha _{11}-\lambda\alpha' _{11}, \
\frac{\partial\varphi^{-}(a,\lambda )}{\partial x})=\alpha
_{10}-\lambda\alpha' _{10} \\ &&\label{10}
\psi^{+}(b,\lambda)=\alpha _{21}+\lambda\alpha' _{21}, \
\frac{\partial \psi^{+}(b,\lambda)}{\partial x}=\alpha
_{20}+\lambda\alpha' _{20}
\end{eqnarray}
respectively. In terms of these solution we shall define the other
solutions $\varphi^{+}(x,\lambda) \ \textrm{and}  \
\psi^{-}(x,\lambda)$ by initial conditions
\begin{eqnarray}\label{8}
&&\varphi^{+}(c+,\lambda)
=\frac{1}{\Delta_{12}}(\Delta_{23}\varphi^{-}(c,\lambda
)+\Delta_{24}\frac{\partial\varphi^{-}(c,\lambda )}{\partial x})
\\ &&\label{9} \frac{\partial\varphi^{+}(c+,\lambda)}{\partial x}
=\frac{-1}{\Delta_{12}}(\Delta_{13}\varphi ^{-}(c,\lambda
)+\Delta_{14}\frac{\partial\varphi^{-}(c,\lambda )}{\partial x})
\\ \nonumber
\textrm{and}
 \\ &&\label{11}
\psi^{-}(c,\lambda )
=\frac{-1}{\Delta_{34}}(\Delta_{14}\psi^{+}(c,\lambda
)+\Delta_{24}\frac{\partial\psi^{+}(c,\lambda )}{\partial x})
\\ && \label{12} \frac{\partial\psi^{-}(c,\lambda )}{\partial x})
=\frac{1}{\Delta_{34}}(\Delta_{13}\psi ^{+}(c,\lambda
)+\Delta_{23}\frac{\partial\psi^{+}(c,\lambda )}{\partial x})
\end{eqnarray}
respectively,  where $\Delta_{ij} \ (1\leq i< j \leq 4)$ denotes the
determinant of the i-th
 and j-th columns of the matrix $$ T= \left[%
\begin{array}{cccc}
  \beta^{-}_{10} & \beta^{-}_{11} & \beta^{+}_{10} & \beta^{+}_{11} \\
  \beta^{-}_{20} & \beta^{-}_{21} & \beta^{+}_{20} & \beta^{+}_{21}
  \\
\end{array} %
 \right]. $$ The existence and uniqueness of these solutions are
follows from well-known theorem of ordinary differential equation
theory. Moreover by applying the method of \cite{ka} we can prove
that each of these solutions are entire functions of parameter
$\lambda \in \mathbb{C}$ for each fixed $x$. Taking into account
(\ref{8})-(\ref{12}) and the fact that the Wronskians $
\omega\pm(\lambda):=W[\varphi^{\pm}(x,\lambda ),\psi ^{\pm
}x,\lambda )]$ are independent of variable $x$  we have
\begin{eqnarray*}
\omega^{+}(\lambda ) &=&\varphi^{+}(c,\lambda )\frac{\partial\psi
^{+}(c,\lambda )}{\partial x}-\frac{\partial\varphi
^{+}(c,\lambda )}{\partial x}\psi ^{+}(c,\lambda ) \\
 &=&\frac{\Delta_{34}}{\Delta_{12}}(\varphi^{-}(c,\lambda )\frac{\partial\psi
^{-}(c,\lambda )}{\partial x}-\frac{\partial\varphi
^{-}(c,\lambda )}{\partial x}\psi ^{-}(c,\lambda )) \\
&=&\frac{\Delta_{34}}{\Delta_{12}} \omega^{-}(\lambda ).
\end{eqnarray*}
It is convenient to define  the characteristic function  \
$\omega(\lambda)$ for our problem $(\ref{1})-(\ref{3})$ as
$$
\omega(\lambda):= \Delta_{34} \omega^{-}(\lambda) = \Delta_{12} \
\omega^{+}(\lambda).
$$
Obviously, $\omega(\lambda)$ is an entire function. By applying the
technique of \cite{osh6} we can prove that there are infinitely many
eigenvalues $\lambda_{n}, \ n=1,2,...$ of the problem
$(\ref{1})-(\ref{3})$ which are coincide with the zeros of
characteristic function  \ $\omega(\lambda)$.
\begin{thm}\label{thm1}
Each eigenvalue of the problem (\ref{1})-(\ref{3}) is the simple
zero of \ $w(\lambda)$. \end{thm}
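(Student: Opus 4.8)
The plan is to prove that $\dot{\omega}(\lambda_{n}):=\frac{d\omega}{d\lambda}(\lambda_{n})\neq 0$ at every eigenvalue $\lambda_{n}$, which is exactly the statement that $\lambda_{n}$ is a simple zero of $\omega$. A dot denotes $\partial/\partial\lambda$, and $\varphi$ denotes the solution equal to $\varphi^{-}$ on $[a,c)$ and to $\varphi^{+}$ on $(c,b]$.

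First I would record two facts. Since $\omega(\lambda_{n})=\Delta_{34}\,\omega^{-}(\lambda_{n})=\Delta_{12}\,\omega^{+}(\lambda_{n})=0$ and $\omega^{\pm}$ is the Wronskian of $\varphi^{\pm}$ and $\psi^{\pm}$, the pair $\varphi^{-}(\cdot,\lambda_{n}),\psi^{-}(\cdot,\lambda_{n})$ is linearly dependent on $[a,c)$ and likewise $\varphi^{+}(\cdot,\lambda_{n}),\psi^{+}(\cdot,\lambda_{n})$ on $(c,b]$; because the transmission conditions (\ref{4}) relate the two sides through one linear isomorphism, there is a single constant $k_{n}\neq 0$ with $\psi(x,\lambda_{n})=k_{n}\varphi(x,\lambda_{n})$ on $[a,c)\cup(c,b]$, and $\varphi(\cdot,\lambda_{n})$ is an associated eigenfunction. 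Second, evaluating $\omega^{+}(\lambda)=W[\varphi^{+},\psi^{+}]$ at $x=b$ and using the initial data (\ref{10}) gives
\begin{equation*}
\omega(\lambda)=\Delta_{12}\bigl((\alpha_{20}+\lambda\alpha'_{20})\varphi^{+}(b,\lambda)-(\alpha_{21}+\lambda\alpha'_{21})\varphi^{+\prime}(b,\lambda)\bigr),
\end{equation*}
that is, $\omega(\lambda)$ equals $\Delta_{12}$ times the left-hand side of (\ref{3}) evaluated on $\varphi(\cdot,\lambda)$; hence $\dot{\omega}(\lambda_{n})$ is expressible through the values of $\varphi,\varphi',\dot{\varphi},\dot{\varphi}'$ at $x=b$. (An entirely symmetric route starts from $x=a$ using $\psi$ and (\ref{2}), (\ref{7}).)

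The engine is a Lagrange-type identity. Differentiating (\ref{1}), written for $\varphi$, with respect to $\lambda$ shows that $\dot{\varphi}$ solves the corresponding equation with inhomogeneity $\varphi$, and since $p$ is constant on each subinterval one gets
\begin{equation*}
\frac{d}{dx}\Bigl[p(x)\bigl(\varphi\dot{\varphi}'-\varphi'\dot{\varphi}\bigr)\Bigr]=-\varphi^{2}
\end{equation*}
on each of $[a,c)$ and $(c,b]$. Integrating over both subintervals and adding, the contributions at $c$ cancel: the coefficients $\Delta_{ij}$ being $\lambda$-independent, $\dot{\varphi}$ obeys the same transmission relations (\ref{8})--(\ref{9}) as $\varphi$, and by the Plücker identity $\Delta_{12}\Delta_{34}-\Delta_{13}\Delta_{24}+\Delta_{14}\Delta_{23}=0$ the map carrying $(c-)$-data to $(c+)$-data has determinant $\Delta_{34}/\Delta_{12}$, so $(\varphi\dot{\varphi}'-\varphi'\dot{\varphi})(c+)=\frac{\Delta_{34}}{\Delta_{12}}(\varphi\dot{\varphi}'-\varphi'\dot{\varphi})(c-)$, which exactly offsets the jump of $p$ at $c$ (equivalently, is absorbed by the weights of the modified inner product). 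What remains is a relation expressing $p^{+}(\varphi\dot{\varphi}'-\varphi'\dot{\varphi})(b,\lambda)$ in terms of $p^{-}(\varphi\dot{\varphi}'-\varphi'\dot{\varphi})(a,\lambda)$ and $\int_{a}^{b}\varphi^{2}(x,\lambda)\,dx$, and by (\ref{7}) the term at $a$ is the $\lambda$-free constant $\alpha_{10}\alpha'_{11}-\alpha_{11}\alpha'_{10}$.

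To finish, differentiate the displayed formula for $\omega$, set $\lambda=\lambda_{n}$, and use $\varphi(\cdot,\lambda_{n})=k_{n}\psi(\cdot,\lambda_{n})$ together with (\ref{10}) to replace $\alpha_{20}+\lambda_{n}\alpha'_{20}$ and $\alpha_{21}+\lambda_{n}\alpha'_{21}$ by $\varphi^{+\prime}(b,\lambda_{n})/k_{n}$ and $\varphi^{+}(b,\lambda_{n})/k_{n}$. Feeding in the relation from the previous step, $\dot{\omega}(\lambda_{n})$ collapses to a nonzero constant (a multiple of $\Delta_{34}/k_{n}$) times an expression of the form $\int_{a}^{b}\varphi^{2}(x,\lambda_{n})\,dx$ plus a positive-coefficient combination of the $2\times 2$ determinants $\rho_{1}$ and $\rho_{2}$ built from the coefficients of (\ref{2}) and (\ref{3}). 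Under the standard positivity hypotheses on those coefficients (the ones that make the problem self-adjoint and the modified inner product positive definite), this expression is, up to a positive factor, the squared norm of the eigenvector $\bigl(\varphi(\cdot,\lambda_{n}),\ \alpha'_{10}\varphi(a)-\alpha'_{11}\varphi'(a),\ \alpha'_{20}\varphi(b)-\alpha'_{21}\varphi'(b)\bigr)$ in that inner product; since $\varphi(\cdot,\lambda_{n})\not\equiv 0$ it is strictly positive, so $\dot{\omega}(\lambda_{n})\neq 0$. The step I expect to carry the real work is this last one: tracking the constants and signs from the boundary terms at $a$, $c$ and $b$ and matching them against the inner-product weights, so that the right-hand side truly assembles into a sign-definite quadratic form — in particular verifying the cancellation at $c$ via the Plücker relation, and that the $\int_{a}^{b}\varphi^{2}$, $\rho_{1}$ and $\rho_{2}k_{n}^{2}$ contributions all carry the same sign, which is where the self-adjoint structure enters.
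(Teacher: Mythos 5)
The paper itself leaves the proof of Theorem \ref{thm1} empty, so there is no argument to compare yours against; judged on its own, your strategy --- write $\omega(\lambda)=\Delta_{12}\bigl((\alpha_{20}+\lambda\alpha'_{20})\varphi^{+}(b,\lambda)-(\alpha_{21}+\lambda\alpha'_{21})\varphi^{+\prime}(b,\lambda)\bigr)$, differentiate in $\lambda$, use $\frac{d}{dx}\bigl[p(\varphi\dot\varphi'-\varphi'\dot\varphi)\bigr]=-\varphi^{2}$ on each subinterval together with the linear dependence $\psi(\cdot,\lambda_{n})=k_{n}\varphi(\cdot,\lambda_{n})$, and identify $\dot\omega(\lambda_{n})$ with a sign-definite quadratic form built from $\theta_{1}$, $\theta_{2}$ and $\int\varphi^{2}$ --- is the standard Fulton-type argument for eigenparameter-dependent boundary conditions and is the right way to obtain simplicity here. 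The skeleton (a single constant $k_{n}$ for both intervals, the $\lambda$-free Wronskian-type term at $x=a$, the role of $B'_{a}$, $B'_{b}$) is sound.

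The genuine gap sits exactly in the step you defer, and it is not mere bookkeeping. First, your treatment of the interface is internally inconsistent: you assert that the map carrying $(c-)$-data to $(c+)$-data has determinant $\Delta_{34}/\Delta_{12}$ (correct for (\ref{8})--(\ref{9}) as printed, by the Pl\"ucker relation) and simultaneously that the resulting jump ``is absorbed by the weights of the modified inner product.'' With the weights of (\ref{ic}) --- $\Delta_{12}/p^{-}$ on $[a,c)$ and $\Delta_{34}/p^{+}$ on $(c,b]$ --- cancellation of the terms at $c$ requires the \emph{reciprocal} determinant $\Delta_{12}/\Delta_{34}$, which is what the transmission conditions (\ref{4}) themselves produce (the map they induce on Cauchy data at $c$ has determinant $\Delta_{12}/\Delta_{34}$) and what the paper's identity (\ref{c3}) encodes; note that (\ref{8})--(\ref{9}) as printed are \emph{not} equivalent to (\ref{4}) in general (test $y(c+)=\gamma y(c-)$, $y'(c+)=\delta y'(c-)$, which (\ref{8}) turns into $y(c+)=\gamma^{-1}y(c-)$). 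Until you decide which relation $\varphi$ actually satisfies, your assembled expression comes out with the two interval weights swapped and is not the $\mathcal{H}$-norm, so the positivity conclusion does not follow as stated. Second, the final assembly is the entire content of the theorem and is left as a promissory note; with the (\ref{4})/(\ref{c3}) convention it does close, giving $(\Delta_{34}/\Delta_{12})\,k_{n}\,\dot\omega(\lambda_{n})=\Delta_{34}\theta_{2}+k_{n}^{2}\Delta_{12}\theta_{1}+k_{n}^{2}[\varphi,\varphi]_{\mathcal{H}_{1}}>0$, where one must also record that $k_{n}$ is real and nonzero (because $\lambda_{n}$ is real by the symmetry of $\mathcal{L}$ and all data are real) --- a point your sketch omits. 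So: right plan, but the determinant/weight matching at $c$ and the explicit sign-definite identity must actually be carried out for this to be a proof.
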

\begin{proof}
\end{proof}
\begin{lem}\label{bag}
Let $\lambda_{0}$  be zero of $w(\lambda)$. Then the solutions
$\varphi(x,\lambda_{0}) \ \textrm{and} \ \psi(x,\lambda_{0})$ are
linearly dependent.
\end{lem}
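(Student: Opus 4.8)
The plan is to localize the hypothesis to the two subintervals, apply on each of them the classical fact that a vanishing Wronskian forces proportionality of two solutions of a second order linear equation, and then sew the two proportionality constants together by means of the transmission relations (\ref{8})--(\ref{12}) at the interior point $x=c$.

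First I would use the identity $\omega(\lambda)=\Delta_{34}\,\omega^{-}(\lambda)=\Delta_{12}\,\omega^{+}(\lambda)$ together with $\Delta_{12}\neq 0$ and $\Delta_{34}\neq 0$ (they occur as denominators in (\ref{8})--(\ref{12})): the hypothesis $\omega(\lambda_{0})=0$ then forces $\omega^{-}(\lambda_{0})=\omega^{+}(\lambda_{0})=0$, i.e.\ $W[\varphi^{-}(\cdot,\lambda_{0}),\psi^{-}(\cdot,\lambda_{0})]\equiv 0$ on $[a,c)$ and $W[\varphi^{+}(\cdot,\lambda_{0}),\psi^{+}(\cdot,\lambda_{0})]\equiv 0$ on $(c,b]$. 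If $\varphi(\cdot,\lambda_{0})\equiv 0$ the claim is trivial, so I may assume $\varphi(\cdot,\lambda_{0})\not\equiv 0$; then, since (\ref{8})--(\ref{9}) express the Cauchy data of $\varphi^{+}$ at $c+$ as an invertible linear image of those of $\varphi^{-}$ at $c$ (the transition matrix has determinant $\Delta_{34}/\Delta_{12}$, the same factor giving $\omega^{+}=(\Delta_{34}/\Delta_{12})\,\omega^{-}$), both $\varphi^{-}(\cdot,\lambda_{0})$ and $\varphi^{+}(\cdot,\lambda_{0})$ are nontrivial solutions of (\ref{1}), hence have nonvanishing Cauchy data at every point. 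Consequently the vanishing of the two Wronskians yields constants $k^{-},k^{+}$ with
$$\psi^{-}(x,\lambda_{0})=k^{-}\,\varphi^{-}(x,\lambda_{0})\ \ (x\in[a,c)),\qquad \psi^{+}(x,\lambda_{0})=k^{+}\,\varphi^{+}(x,\lambda_{0})\ \ (x\in(c,b]),$$
and it only remains to show $k^{-}=k^{+}$, since then $\psi(x,\lambda_{0})=k^{-}\,\varphi(x,\lambda_{0})$ throughout $[a,c)\cup(c,b]$, which is the assertion.

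To match the two constants I would compare the Cauchy data of $\varphi$ and of $\psi$ at the transmission point. Write $G$ for the (invertible) matrix which, by (\ref{8})--(\ref{9}), sends $\bigl(\varphi^{-}(c,\lambda),\partial_{x}\varphi^{-}(c,\lambda)\bigr)$ to $\bigl(\varphi^{+}(c,\lambda),\partial_{x}\varphi^{+}(c,\lambda)\bigr)$. Solving (\ref{11})--(\ref{12}) for $\psi^{+}$ in terms of $\psi^{-}$ produces the \emph{same} matrix $G$, that is $\bigl(\psi^{+}(c,\lambda),\partial_{x}\psi^{+}(c,\lambda)\bigr)=G\,\bigl(\psi^{-}(c,\lambda),\partial_{x}\psi^{-}(c,\lambda)\bigr)$. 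Granting this and evaluating at $\lambda_{0}$, the proportionalities above give
$$k^{+}\bigl(\varphi^{+}(c,\lambda_{0}),\,\partial_{x}\varphi^{+}(c,\lambda_{0})\bigr)=G\,k^{-}\bigl(\varphi^{-}(c,\lambda_{0}),\,\partial_{x}\varphi^{-}(c,\lambda_{0})\bigr)=k^{-}\bigl(\varphi^{+}(c,\lambda_{0}),\,\partial_{x}\varphi^{+}(c,\lambda_{0})\bigr),$$
and since $\bigl(\varphi^{+}(c,\lambda_{0}),\,\partial_{x}\varphi^{+}(c,\lambda_{0})\bigr)\neq(0,0)$ this forces $k^{-}=k^{+}$, completing the proof. (Equivalently, one may avoid matrices altogether and argue directly from the transmission conditions (\ref{4}), which $\varphi(\cdot,\lambda)$ and $\psi(\cdot,\lambda)$ satisfy by construction: subtracting $k^{+}V_{j}(\varphi)=0$ from $V_{j}(\psi)=0$ gives $(k^{-}-k^{+})\bigl(\beta^{-}_{j0}\varphi^{-}(c,\lambda_{0})+\beta^{-}_{j1}\partial_{x}\varphi^{-}(c,\lambda_{0})\bigr)=0$ for $j=1,2$, and these two linear forms in the nonzero pair $\bigl(\varphi^{-}(c,\lambda_{0}),\partial_{x}\varphi^{-}(c,\lambda_{0})\bigr)$ cannot both vanish because $\Delta_{12}\neq 0$.)

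The only step that is not routine bookkeeping is the claim that inverting (\ref{11})--(\ref{12}) reproduces the transition matrix of (\ref{8})--(\ref{9}) --- equivalently, that the ,,left-to-right'' and ,,right-to-left'' gluings are mutually inverse linear maps between the one-sided Cauchy data at $c$. This is precisely the Pl\"ucker relation $\Delta_{13}\Delta_{24}-\Delta_{14}\Delta_{23}=\Delta_{12}\Delta_{34}$ for the $2\times 4$ matrix $T$, which is the same identity already underlying the computation of $\omega^{+}=(\Delta_{34}/\Delta_{12})\,\omega^{-}$ in the text, so in effect nothing new has to be verified. Throughout one simply keeps the standing nondegeneracy hypotheses in force, namely $\Delta_{12}\Delta_{34}\neq 0$ (so that (\ref{8})--(\ref{12}) make sense) and the genericity that makes $\varphi^{\pm}(\cdot,\lambda_{0})\not\equiv 0$; in the degenerate case $\varphi(\cdot,\lambda_{0})\equiv 0$ the lemma is trivial, as noted above.
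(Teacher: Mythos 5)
The paper's own proof of Lemma \ref{bag} is left empty, so there is nothing to compare your argument against; judged on its own, your proof is correct and complete, and it is the natural (indeed standard) argument for this kind of transmission problem. All three steps check out. First, since $\Delta_{12}\Delta_{34}\neq 0$, the hypothesis $\omega(\lambda_{0})=0$ does force $\omega^{-}(\lambda_{0})=\omega^{+}(\lambda_{0})=0$, and hence $\psi^{\mp}(\cdot,\lambda_{0})=k^{\mp}\varphi^{\mp}(\cdot,\lambda_{0})$ on the respective subintervals once the trivial case $\varphi(\cdot,\lambda_{0})\equiv 0$ is set aside. Second, your key claim that the two gluings are mutually inverse is exactly the Pl\"ucker relation $\Delta_{13}\Delta_{24}-\Delta_{14}\Delta_{23}=\Delta_{12}\Delta_{34}$: writing $G$ for the transfer matrix of (\ref{8})--(\ref{9}) and $H$ for that of (\ref{11})--(\ref{12}), one gets $\det G=\Delta_{34}/\Delta_{12}$ and $H=G^{-1}$, so the Cauchy data of $\psi^{+}$ at $c+$ is $G$ applied to that of $\psi^{-}$ at $c-$, and $k^{-}=k^{+}$ follows because the Cauchy data of the nontrivial solution $\varphi^{+}$ at $c+$ cannot vanish. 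One caveat concerns only your parenthetical alternative: the assertion that $\varphi(\cdot,\lambda)$ and $\psi(\cdot,\lambda)$ satisfy the transmission conditions (\ref{4}) ``by construction'' is certainly what the authors intend, but if one takes (\ref{8})--(\ref{12}) literally, the left-to-right transfer attached to $\varphi$ works out to $-A^{-1}B$ rather than $-B^{-1}A$ (where $A$ and $B$ denote the left and right $2\times 2$ blocks of $T$), i.e.\ the \emph{inverse} of the map dictated by $V_{j}(y)=0$; so that shortcut is valid for the intended rather than the printed definitions. Your primary matrix argument uses only the identity $H=G^{-1}$, which does hold for the printed definitions, so the proof stands in either reading.
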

\begin{proof}
\end{proof}

\section{\textbf{Operator treatment in modified Hilbert space }}
To analyze the spectrum of the BVTP $(\ref{1})-(\ref{3})$ we shall
construct an adequate  Hilbert space and define a symmetric linear
operator in it  such a way that the considered problem can be
interpreted as the eigenvalue problem of this operator. For this we
assume that $$\Delta_{12}>0,  \ \Delta_{34}>0, \ \theta_{1}= \left[%
\begin{array}{cccc}
  \alpha_{11} & \alpha_{10}  \\
  \alpha'_{11} & \alpha'_{10}
  \\
\end{array}%
 \right]>0,  \  \theta_{2}= \left[%
\begin{array}{cccc}
  \alpha_{21} & \alpha_{20}  \\
  \alpha'_{21} & \alpha'_{20}
  \\
\end{array} %
 \right]>0 $$
and introduce modified inner products on direct sum space
$\mathcal{H}_{1}= L_{2}[a,c)\oplus L_{2}(c,b] \ \textrm{and} \
\mathcal{H}=\mathcal{H}_{1}\oplus \mathbb{C}^{2}$ by
\begin{eqnarray}
&&[f,g]_{\mathcal{H}_{1}}:=\frac{\Delta_{12}}{p^{-}} \ \int_{a}^{c-}
f(x)\overline{g(x)}dx + \frac{\Delta_{34}}{p^{+}} \int_{c+}^{b}
f(x)\overline{g(x)}dx\\ \nonumber \textrm{and} \\ &&\label{ic}
[F,G]_{\mathcal{H}}:=[f,g]_{\mathcal{H}_{1}} +
\frac{\Delta_{12}}{p^{-}\theta_{1}}f_{1}\overline{g_{1}}+\frac{\Delta_{34}}{p^{+}\theta_{2}}f_{2}\overline{g_{2}}
\end{eqnarray}
 for $F=\left(
  \begin{array}{c}
   f(x),
    f_{1}, f_{2} \\
  \end{array}
\right)$,\  $G=\left(
  \begin{array}{c}
    g(x),
  g_{1}, g_{2}  \\
  \end{array}
\right)$ $\in \mathcal{H}$ respectively. Obviously, these inner
products are equivalent to the standard inner products, so,
 $(\mathcal{H}, [.,.]_{\mathcal{H}})$
and $(\mathcal{H}_{1}, [.,.]_{\mathcal{H}_{1}})$ are also Hilbert
spaces. Let us now define the boundary functionals
\begin{eqnarray*}&&B_{a}[f]:= \alpha_{10}f(a)-\alpha_{11}f'(a),  \ \  B'_{a}[f]:=
\alpha'_{10}f(a)-\alpha'_{11}f'(a)\\
&&B_{b}[f]:= \alpha_{20}f(b)-\alpha_{21}f'(b),  \ \  B'_{b}[f]:=
\alpha'_{20}f(b)-\alpha'_{21}f'(b)
\end{eqnarray*}
and construct the operator $\mathcal{L}:\mathcal{H}\rightarrow
\mathcal{H}$   with the domain
\begin{eqnarray*}\label{2.3}
dom(\mathcal{L}):= &&\bigg \{F=(f(x), f_{1}, f_{2}):f(x), f'(x) \
\in AC_{loc}(a, c)\cap AC_{loc}(c, b),\nonumber\\ &&\textrm{and has
a finite limits} \ f(c\mp0) \ \textrm{and} \ f'(c\mp0), \ \ell F \in
L_{2}[a,b],\nonumber\\&&V_{3}(f)=V_{4}(f)=0,\ f_{1}= B'_{a}[f],
f_{2}=-B'_{b}[f]\bigg \}
\end{eqnarray*}
and action low$$ \mathcal{L}(f(x),B'_{a}[f], -B'_{b}[f])=(\ell f,
B_{a}[f], B_{b}[f]).$$ Then the problem $(\ref{1})-(\ref{3})$ can be
written in the operator equation form as
$$\mathcal{L}F=\lambda F, \ \ F=(f(x), B'_{a}[f], -B'_{b}[f])
 \in dom(\mathcal{L})$$
in the Hilbert space $\mathcal{H}$.
\begin{thm}\label{2.1}
The linear operator $\mathcal{L}$  is symmetric.
\end{thm}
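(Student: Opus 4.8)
The plan is to verify directly that $[\mathcal{L}F,G]_{\mathcal{H}} = [F,\mathcal{L}G]_{\mathcal{H}}$ for all $F=(f(x),B'_a[f],-B'_b[f])$ and $G=(g(x),B'_a[g],-B'_b[g])$ in $\mathrm{dom}(\mathcal{L})$. Writing out the left-hand side using the definition (\ref{ic}) of the inner product, the bulk terms are
$$
\frac{\Delta_{12}}{p^-}\int_a^{c-}(\ell f)\overline{g}\,dx + \frac{\Delta_{34}}{p^+}\int_{c+}^b(\ell f)\overline{g}\,dx,
$$
plus the two finite-dimensional pieces $\frac{\Delta_{12}}{p^-\theta_1}B_a[f]\overline{B'_a[g]}$ and $\frac{\Delta_{34}}{p^+\theta_2}B_b[f]\overline{B'_b[g]}$. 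The natural first step is to integrate by parts twice on each subinterval, which produces $[\ell f,\overline g]-[f,\overline{\ell g}] = -p^{\pm}\,W[f,\overline g]$ evaluated at the endpoints of each subinterval. Collecting everything, the difference $[\mathcal{L}F,G]_{\mathcal{H}}-[F,\mathcal{L}G]_{\mathcal{H}}$ reduces to a sum of four Wronskian-type boundary terms, one at each of $x=a$, $x=b$, $x=c-$, $x=c+$, together with the contributions from the $\mathbb{C}^2$ components.

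Next I would handle each endpoint separately. At $x=a$: the terms coming from the integration by parts, namely $-\Delta_{12}\,W_a[f,\overline g]/1$ (absorbing $p^-$ appropriately), combine with the $\mathbb{C}^2$ term $\frac{\Delta_{12}}{p^-\theta_1}(B_a[f]\overline{B'_a[g]} - B'_a[f]\overline{B_a[g]})$. Using the explicit formulas for $B_a,B'_a$ one checks that $B_a[f]B'_a[g]-B'_a[f]B_a[g] = (\alpha_{10}\alpha'_{11}-\alpha_{11}\alpha'_{10})(f(a)g'(a)-f'(a)g(a))$, and $\alpha_{10}\alpha'_{11}-\alpha_{11}\alpha'_{10} = -\det\theta_1 = -\theta_1$ in the notation of the paper; this is precisely designed so that this combination cancels the boundary term $W_a[f,\overline g]$ coming from the integral. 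The same bookkeeping at $x=b$ works with $\theta_2$ and the opposite sign (which explains the $+\lambda$ in (\ref{3}) versus $-\lambda$ in (\ref{2}), and the minus sign in $f_2=-B'_b[f]$).

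Then I would treat the interior point $x=c$, where the two Wronskian contributions are $\frac{\Delta_{12}}{p^-}W_{c-}[f,\overline g]$ and $-\frac{\Delta_{34}}{p^+}W_{c+}[f,\overline g]$ (signs from the orientation of the subintervals). Here I would invoke the transmission conditions $V_3(f)=V_4(f)=0$, $V_3(g)=V_4(g)=0$: these are two linear relations among $f(c-),f'(c-),f(c+),f'(c+)$ whose coefficient matrix is $T$. The key algebraic identity is that for any two vectors satisfying these homogeneous relations, $\Delta_{34}\,W_{c-} = \Delta_{12}\,W_{c+}$ — this is exactly the same computation that was used earlier in the excerpt to show $\omega^+(\lambda) = \frac{\Delta_{34}}{\Delta_{12}}\omega^-(\lambda)$, applied now to $f$ and $\overline g$ instead of $\varphi^\pm,\psi^\pm$. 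I would expect the main obstacle to be precisely this interior cancellation: one must express $f(c+),f'(c+)$ in terms of $f(c-),f'(c-)$ via the $2\times 2$ minors $\Delta_{ij}$ of $T$ (using $\Delta_{12}\neq 0$), substitute into $W_{c+}$, and recognize the resulting $2\times 2$ determinant as $\frac{\Delta_{34}}{\Delta_{12}}W_{c-}$ through a Plücker-type relation among the $\Delta_{ij}$; keeping the six minors and their signs straight is the fiddly part. Once all four endpoint contributions vanish, symmetry follows. (I would also remark, as is standard, that $\mathrm{dom}(\mathcal{L})$ is dense in $\mathcal{H}$, so $\mathcal{L}$ is in fact symmetric in the operator-theoretic sense, not merely formally.)
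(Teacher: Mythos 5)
Your overall strategy coincides with the proof given in the text: after noting density of the domain, integrate by parts to reduce $[\mathcal{L}F,G]_{\mathcal{H}}-[F,\mathcal{L}G]_{\mathcal{H}}$ to the Wronskian boundary terms of (\ref{2.2}), cancel the contributions at $x=a$ and $x=b$ against the $\mathbb{C}^{2}$-components via the determinants $\theta_{1},\theta_{2}$ (identities (\ref{cl})--(\ref{c2})), and cancel the interior contributions at $x=c\mp$ using the transmission conditions (identity (\ref{c3})). So the decomposition and the cancellation scheme are the same as here.

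The one step that would fail as you stated it is the interior identity. You assert that $V_{3}(f)=V_{4}(f)=V_{3}(g)=V_{4}(g)=0$ give $\Delta_{34}\,W(f,\overline g;c-)=\Delta_{12}\,W(f,\overline g;c+)$; what is actually true, and what the cancellation of $\Delta_{12}W(f,\overline g;c-)-\Delta_{34}W(f,\overline g;c+)$ in (\ref{2.2}) requires, is the opposite pairing $\Delta_{12}\,W(f,\overline g;c-)=\Delta_{34}\,W(f,\overline g;c+)$, i.e. (\ref{c3}). Indeed, writing the transmission conditions as $B^{-}u^{-}+B^{+}u^{+}=0$ with $u^{\mp}=(f(c\mp),f'(c\mp))^{T}$ and $B^{-},B^{+}$ the left and right $2\times 2$ blocks of $T$, the transfer matrix is $M=-(B^{+})^{-1}B^{-}$ with $\det M=\det B^{-}/\det B^{+}=\Delta_{12}/\Delta_{34}$, hence $W(f,\overline g;c+)=\frac{\Delta_{12}}{\Delta_{34}}W(f,\overline g;c-)$; the Pl\"{u}cker relation is needed only if one insists on writing $M$ entrywise through the six minors. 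With your version the two interior terms combine to $\frac{\Delta_{12}^{2}-\Delta_{34}^{2}}{\Delta_{12}}\,W(f,\overline g;c-)$, which vanishes only when $\Delta_{12}=\pm\Delta_{34}$, so symmetry would not follow. The source of the slip is your appeal to the computation $\omega^{+}(\lambda)=\frac{\Delta_{34}}{\Delta_{12}}\omega^{-}(\lambda)$: that computation concerns the specially transferred solutions $\varphi^{\pm},\psi^{\pm}$ of (\ref{8})--(\ref{12}), where $\varphi$ is transported left-to-right and $\psi$ right-to-left, not an arbitrary pair of domain functions subject to $V_{3}=V_{4}=0$, and it points in the opposite direction. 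Your own proposed derivation (solve for the data at $c+$ through the minors of $T$ and take a $2\times 2$ determinant) is the correct repair and yields (\ref{c3}); once this is fixed, and the factors $p^{\mp}$ in the weights of the $\mathbb{C}^{2}$-components are tracked consistently with (\ref{cl})--(\ref{c2}) (algebraically $B_{a}[f]\overline{B'_{a}[g]}-B'_{a}[f]\overline{B_{a}[g]}=\theta_{1}W(f,\overline g;a)$, so the normalization of the inner product matters and your ``absorbing $p^{-}$ appropriately'' should be made explicit), the argument closes exactly as in the text.
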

\begin{proof}
By applying the method of  \cite{osh6} it is not difficult to show
that $dom(\mathcal{L})$ is dense in the Hilbert space $\mathcal{H}$.
Now let $F=(f(x),B'_{a}[f], -B'_{b}[f]),G=(g(x),B'_{a}[g],
-B'_{b}[g]) \in dom(\mathcal{L}).$ By partial integration we have
\begin{eqnarray}\label{2.2}
&&[\mathcal{L}F,G]_{\mathcal{H}}-[F,\mathcal{L}G]_{\mathcal{H}} =
 \Delta_{12} \ W(f, \overline{g};c-) -
\Delta_{12} \ W(f, \overline{g};a)  \nonumber\\
&&+\Delta_{34} \ W(f,\overline{g};b) - \Delta_{34} \
W(f,\overline{g};c+)
+\frac{\Delta_{12}}{p^{-}\theta_{1}}(B_{a}[f]\overline{B'_{a}[g]}-B'_{a}[f]\overline{B_{a}[g]})\nonumber\\
&&+
\frac{\Delta_{34}}{p^{+}\theta_{2}}(B'_{b}[f]\overline{B_{b}[g]}-B_{b}[f]\overline{B'_{b}[g]})
\end{eqnarray}
where, as usual, $W(f, \overline{g};x)$ denotes the Wronskians of
the functions $f$ \textrm{and} $ \overline{g}$. From the definitions
of boundary functionals we get that
\begin{eqnarray}\label{cl} &&B_{a}[f]\overline{B'_{a}[g]}-B'_{a}[f]\overline{B_{a}[g]}=p^{-}\theta_{1}
W(f,\overline{g};a), \\ && \label{c2}
B'_{b}[f]\overline{B_{b}[g]}-B_{b}[f]\overline{B'_{b}[g]}=-p^{+}\theta_{2}
W(f,\overline{g};b)
\end{eqnarray}
Further, taking in view the definition of $\mathcal{L}$ and initial
conditions $(\ref{7})-(\ref{12})$ we derive that
\begin{eqnarray}\label{c3}W(f, \overline{g};c-) =\frac{ \Delta_{34}}{\Delta_{12}} \
W(f, \overline{g};c+).
\end{eqnarray}
Finally, substituting  (\ref{cl}),  (\ref{c2}) and (\ref{c3}) in
(\ref{2.2}) we have
\[[\mathcal{L}F,G]_{\mathcal{H}}=[F,\mathcal{L}G]_{\mathcal{H}} \  \textrm{for \ every} \  F,G \in dom(\mathcal{L}), \] so the operator $\mathcal{L}$ is
symmetric in $\mathcal{H}$. The proof is complete.
\end{proof}
\begin{cor}\label{rem2} \textbf{(i)} The eigenvalues of the problem $(\ref{1})-(\ref{3})$ are real.\\
\textbf{(ii)} If $f(x)$ \textrm{and} $g(x)$   are eigenfunctions
corresponding to distinct eigenvalues, then they are ,,orthogonal"
in the sense of
\begin{eqnarray}\label{2.3}
[f,g]_{\mathcal{H}_{1}} +
\frac{\Delta_{12}}{p^{-}\theta_{1}}{}B'_{a}[f]B'_{a}[g]
+\frac{\Delta_{34}}{p^{+}\theta_{2}}{}B'_{b}[f]B'_{b}[g] =0.
\end{eqnarray}
where $F=(f(x),B'_{a}[f], -B'_{b}[f]),G=(g(x),B'_{a}[g], -B'_{b}[g])
\in dom(\mathcal{L})$.
\end{cor}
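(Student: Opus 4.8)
The plan is to deduce both assertions directly from the symmetry of $\mathcal{L}$ established in Theorem \ref{2.1}, exactly as one does in the classical self-adjoint theory, the only extra care being the bookkeeping with the $\mathbb{C}^2$-components of the inner product $[\cdot,\cdot]_{\mathcal{H}}$ and the passage to real eigenfunctions.

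For part (i), I would argue as follows. Let $\lambda$ be an eigenvalue of $(\ref{1})-(\ref{3})$. By the operator reformulation it is an eigenvalue of $\mathcal{L}$, so there is $F=(f(x),B'_{a}[f],-B'_{b}[f])\in dom(\mathcal{L})$ with $F\neq 0$ and $\mathcal{L}F=\lambda F$. Since $[\cdot,\cdot]_{\mathcal{H}}$ is equivalent to the standard inner product it is positive definite, hence $[F,F]_{\mathcal{H}}>0$. Using the symmetry of $\mathcal{L}$ from Theorem \ref{2.1},
\[
\lambda\,[F,F]_{\mathcal{H}}=[\mathcal{L}F,F]_{\mathcal{H}}=[F,\mathcal{L}F]_{\mathcal{H}}=\overline{\lambda}\,[F,F]_{\mathcal{H}},
\]
so $(\lambda-\overline{\lambda})[F,F]_{\mathcal{H}}=0$ and therefore $\lambda=\overline{\lambda}$, i.e. $\lambda\in\mathbb{R}$.

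For part (ii), let $\lambda_{1}\neq\lambda_{2}$ be eigenvalues with corresponding eigenfunctions $f(x)$ and $g(x)$, and set $F=(f(x),B'_{a}[f],-B'_{b}[f])$, $G=(g(x),B'_{a}[g],-B'_{b}[g])\in dom(\mathcal{L})$. Since the coefficients of the equation $(\ref{1})$ and of the conditions $(\ref{2})-(\ref{4})$ are real and, by part (i), $\lambda_{1},\lambda_{2}$ are real, the real and imaginary parts of $f,g$ are themselves eigenfunctions; hence we may and do take $f,g$ real-valued, so that all the scalar entries $B'_{a}[f],B'_{b}[f],B'_{a}[g],B'_{b}[g]$ are real. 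Symmetry of $\mathcal{L}$ together with $\lambda_{2}\in\mathbb{R}$ gives
\[
\lambda_{1}[F,G]_{\mathcal{H}}=[\mathcal{L}F,G]_{\mathcal{H}}=[F,\mathcal{L}G]_{\mathcal{H}}=\overline{\lambda_{2}}[F,G]_{\mathcal{H}}=\lambda_{2}[F,G]_{\mathcal{H}},
\]
whence $(\lambda_{1}-\lambda_{2})[F,G]_{\mathcal{H}}=0$ and so $[F,G]_{\mathcal{H}}=0$. Finally I would expand $[F,G]_{\mathcal{H}}$ by its definition $(\ref{ic})$, substituting $f_{1}=B'_{a}[f]$, $f_{2}=-B'_{b}[f]$, $g_{1}=B'_{a}[g]$, $g_{2}=-B'_{b}[g]$ and using that these quantities are real; the two boundary terms become $\frac{\Delta_{12}}{p^{-}\theta_{1}}B'_{a}[f]B'_{a}[g]$ and $\frac{\Delta_{34}}{p^{+}\theta_{2}}B'_{b}[f]B'_{b}[g]$, which yields precisely $(\ref{2.3})$.

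The only genuinely delicate point is the reduction to real eigenfunctions used in part (ii); I would spell this out explicitly (real coefficients plus real eigenvalue $\Rightarrow$ $\mathrm{Re}\,f$, $\mathrm{Im}\,f$ solve the same homogeneous BVTP), since without it the products $f_{i}\overline{g_{i}}$ would not simplify to the bilinear, conjugation-free expression appearing in $(\ref{2.3})$. Everything else is a direct, essentially formal, consequence of Theorem \ref{2.1} and the positive-definiteness of $[\cdot,\cdot]_{\mathcal{H}}$.
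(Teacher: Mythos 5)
Your proof is correct and is exactly the argument the paper intends: Corollary \ref{rem2} is stated as an immediate consequence of the symmetry of $\mathcal{L}$ (Theorem \ref{2.1}), and your derivation of realness from $\lambda[F,F]_{\mathcal{H}}=\overline{\lambda}[F,F]_{\mathcal{H}}$ and of orthogonality from $(\lambda_1-\lambda_2)[F,G]_{\mathcal{H}}=0$ is the standard route. Your explicit justification for passing to real eigenfunctions, which makes the conjugation-free form of \eqref{2.3} legitimate, is a point the paper leaves implicit and is a worthwhile addition.
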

\begin{thm}\label{2.1}
The linear operator $\mathcal{L}$  is self-adjoint.
\end{thm}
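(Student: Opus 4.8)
The plan is as follows. Since $\mathcal{L}$ is densely defined and has just been shown to be symmetric, it suffices to establish the opposite inclusion $dom(\mathcal{L}^{*})\subseteq dom(\mathcal{L})$; then automatically $\mathcal{L}^{*}U=\mathcal{L}U$ on $dom(\mathcal{L}^{*})$, so $\mathcal{L}=\mathcal{L}^{*}$. Accordingly, I would fix $U=(u(x),u_{1},u_{2})\in dom(\mathcal{L}^{*})$, put $V:=\mathcal{L}^{*}U=(v(x),v_{1},v_{2})$, and use the defining identity $[\mathcal{L}F,U]_{\mathcal{H}}=[F,V]_{\mathcal{H}}$, valid for every $F=(f(x),B'_{a}[f],-B'_{b}[f])\in dom(\mathcal{L})$, to squeeze out all the membership conditions for $dom(\mathcal{L})$.

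First I would test against $F$ with $f\in C_{0}^{\infty}(a,c)\cup C_{0}^{\infty}(c,b)$ (for which $B'_{a}[f]=B'_{b}[f]=0$ and the transmission conditions hold trivially), so that the identity reduces to $[\ell f,u]_{\mathcal{H}_{1}}=[f,v]_{\mathcal{H}_{1}}$; a Weyl--type regularity lemma then gives $u,u'\in AC_{loc}(a,c)\cap AC_{loc}(c,b)$, $\ell u=v\in L_{2}[a,b]$, and, because $v$ is square--integrable near each endpoint, the existence of the one--sided limits $u(c\mp0),u'(c\mp0)$ and of $u,u'$ at $a$ and $b$. Hence $u$ already meets the regularity requirements in $dom(\mathcal{L})$. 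Next, knowing $\ell u=v$, for an arbitrary $F\in dom(\mathcal{L})$ I would apply Green's formula on $[a,c)$ and on $(c,b]$ to express $[\ell f,u]_{\mathcal{H}_{1}}-[f,\ell u]_{\mathcal{H}_{1}}$ through the Wronskian terms as in $(\ref{2.2})$; substituting into $[\mathcal{L}F,U]_{\mathcal{H}}-[F,V]_{\mathcal{H}}=0$ gives
\begin{eqnarray*}
0&=&\Delta_{12}W(f,\overline{u};c-)-\Delta_{34}W(f,\overline{u};c+)-\Delta_{12}W(f,\overline{u};a)+\Delta_{34}W(f,\overline{u};b)\\
&&+\frac{\Delta_{12}}{p^{-}\theta_{1}}\big(B_{a}[f]\overline{u_{1}}-B'_{a}[f]\overline{v_{1}}\big)+\frac{\Delta_{34}}{p^{+}\theta_{2}}\big(B_{b}[f]\overline{u_{2}}+B'_{b}[f]\overline{v_{2}}\big)
\end{eqnarray*}
for all $F\in dom(\mathcal{L})$. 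Since a routine interpolation/gluing construction produces an $F\in dom(\mathcal{L})$ realizing any jet of $f$ at $a$, any jet at $b$, and any jet $(f(c-),f'(c-),f(c+),f'(c+))$ lying in $\ker T$, this identity splits into three independent requirements.

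Taking $f$ supported away from $c$ and $b$ and eliminating $\Delta_{12}W(f,\overline{u};a)$ by the algebraic identity $(\ref{cl})$ (with $g=u$), the $a$--part becomes $\frac{\Delta_{12}}{p^{-}\theta_{1}}\big(B_{a}[f](\overline{u_{1}}-\overline{B'_{a}[u]})+B'_{a}[f](\overline{B_{a}[u]}-\overline{v_{1}})\big)=0$; as $(f(a),f'(a))\mapsto(B_{a}[f],B'_{a}[f])$ is invertible (its determinant is $\theta_{1}>0$), this forces $u_{1}=B'_{a}[u]$ and $v_{1}=B_{a}[u]$. The mirror argument at $b$ via $(\ref{c2})$ gives $u_{2}=-B'_{b}[u]$ and $v_{2}=B_{b}[u]$, so $V=(\ell u,B_{a}[u],B_{b}[u])=\mathcal{L}U$ and only the transmission conditions are left. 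The leftover requirement is $\Delta_{12}W(f,\overline{u};c-)-\Delta_{34}W(f,\overline{u};c+)=0$ for every $(f(c-),f'(c-),f(c+),f'(c+))\in\ker T$. Writing this left side as $\eta_{u}\cdot(f(c-),f'(c-),f(c+),f'(c+))$, one checks $\eta_{u}=J\,(\overline{u(c-)},\overline{u'(c-)},\overline{u(c+)},\overline{u'(c+)})$, where $J$ is the invertible map $(a_{1},a_{2},a_{3},a_{4})\mapsto(\Delta_{12}a_{2},-\Delta_{12}a_{1},-\Delta_{34}a_{4},\Delta_{34}a_{3})$. So the requirement reads $\eta_{u}\in(\ker T)^{\perp}=\mathrm{row}(T)$, and the key computation $TJ^{-1}T^{T}=0$ --- exactly where the determinants $\Delta_{12},\Delta_{34}$, the structure of $T$, and the compatibility relation $(\ref{c3})$ conspire --- shows that $J$ maps $\ker T$ bijectively onto $\mathrm{row}(T)$. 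Hence $\eta_{u}\in\mathrm{row}(T)$ is equivalent to $(\overline{u(c-)},\overline{u'(c-)},\overline{u(c+)},\overline{u'(c+)})\in\ker T$, i.e. to $V_{3}(u)=V_{4}(u)=0$. Therefore $U\in dom(\mathcal{L})$ and $\mathcal{L}^{*}U=V=\mathcal{L}U$, which proves self-adjointness.

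I expect the only genuinely delicate step to be the transmission--condition part: checking the identity $TJ^{-1}T^{T}=0$ (equivalently, that the adjoint transmission conditions produced by the vanishing of the Wronskian jump at $c$ coincide with the original $V_{3},V_{4}$), and verifying that $dom(\mathcal{L})$ is rich enough to realize every admissible boundary jet. Both are short, but they are precisely where the special weights $\Delta_{12}/p^{-},\Delta_{34}/p^{+}$ in $(\ref{ic})$ and the initial conditions $(\ref{8})$--$(\ref{12})$ are used; the regularity lemma and the endpoint bookkeeping are routine.
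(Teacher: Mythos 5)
Your proposal is correct, and there is nothing in the paper to compare it against: the authors state the self-adjointness theorem with an empty proof environment, so your argument actually supplies what the paper omits. The strategy is the standard one (dense domain, symmetry already established, then show $dom(\mathcal{L}^{*})\subseteq dom(\mathcal{L})$ by testing the defining identity of the adjoint against successively richer classes of $F$), and each step checks out. In particular I verified the one step you flag as delicate: with $J^{-1}(b_{1},b_{2},b_{3},b_{4})=(-b_{2}/\Delta_{12},\,b_{1}/\Delta_{12},\,b_{4}/\Delta_{34},\,-b_{3}/\Delta_{34})$ and $r_{1},r_{2}$ the rows of $T$, one finds $r_{i}\cdot J^{-1}r_{i}=0$ by antisymmetry and $r_{2}\cdot J^{-1}r_{1}=\Delta_{12}/\Delta_{12}-\Delta_{34}/\Delta_{34}=0$, so $TJ^{-1}T^{T}=0$ indeed holds, and since $\Delta_{12},\Delta_{34}>0$ force $\mathrm{rank}\,T=2$, the dimension count gives $J^{-1}\mathrm{row}(T)=\ker T$, i.e.\ the adjoint transmission conditions coincide with $V_{3}(u)=V_{4}(u)=0$. (Equivalently, the paper's relation (\ref{c3}) already gives the inclusion $J(\ker T)\subseteq(\ker T)^{\perp}$, and dimensions finish it.) The remaining ingredients --- the Weyl-type regularity of $u$ on each subinterval with regular endpoints, the invertibility of $(f(a),f'(a))\mapsto(B_{a}[f],B'_{a}[f])$ with determinant $\theta_{1}>0$, and the existence of functions in $dom(\mathcal{L})$ realizing arbitrary jets at $a$, $b$ and arbitrary $\ker T$-jets at $c$ --- are routine as you say. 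Two cosmetic points: write the test class as pairs of functions compactly supported in $(a,c)$ and in $(c,b)$ rather than a union of spaces, and note explicitly that the identification $(\ker T)^{\perp}=\mathrm{row}(T)$ uses that all entries of $T$ are real, so the complex conjugates on $u$'s jet do not disturb the subspace argument.
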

\begin{proof}
\end{proof}
\section{\textbf{Solvability of the corresponding  inhomogeneous problem }}
 Now let $\lambda\in\mathbb{C}$ not be an eigenvalue of $\mathcal{L}$ and
consider the operator equation
\begin{eqnarray}\label{141u}
(\lambda I-\mathcal{L})Y=U,
\end{eqnarray}
for arbitrary \ $U=(u(x),u_{1},u_{2}) \in \mathcal{H}$. This
operator equation is equivalent to the following inhomogeneous BVTP
\begin{eqnarray}\label{141}
&& \ \ \ \ \ \ \ \ \ \ \ \ \ \ \ (\lambda -\ell )y(x)= u(x), \ \ x
\in [a,c)\cup(c,b]
\\ && \label{141v} V_{3}(y)=V_{4}(y)=0, \ \ \lambda
B'_{a}[y]-B_{a}[y]= u_{1}, \ \ -\lambda B'_{b}[y]-B_{b}[y]=u_{2}
\end{eqnarray}
 We shall search the resolvent function of this BVTP in the form
\begin{equation}
Y(x,\lambda )=\left\{
\begin{array}{c}
d_{11}(x,\lambda )\varphi^{-}(x,\lambda)+d_{12}(x,\lambda )\psi^{-}(x,\lambda)%
\hbox{ for }x\in \left[ a,c\right) \\
d_{21}(x,\lambda )\varphi^{+}(x,\lambda)+d_{22}(x,\lambda )\psi^{+}(x,\lambda)%
\hbox{ for }x\in \left( c,b\right]%
\end{array}
\right.  \label{(6.6)}
\end{equation}
where the functions $d_{11}(x,\lambda )$, $d_{12}(x,\lambda )$ and $%
d_{21}(x,\lambda )$, $d_{22}(x,\lambda )$ are the solutions of the
system of equations Since $\lambda $ is not an eigenvalue $
\omega^\pm(\lambda) \neq 0 $. By using the conditions $(\ref{141v})$
we can derive that
\begin{eqnarray*}\label{19} &&h_{12}(\lambda)=\frac{u_{1}}{\omega^{-}(\lambda)}, \
h_{21}(\lambda)=\frac{u_{2}}{\omega^{+}(\lambda)},\\ \label{20}
&&h_{11}(\lambda)=\frac{1}{p^{+}\omega^{+}(\lambda)}\int_{c+}^{b}\psi^{+}(y,\lambda)u(y)dy+\frac{u_{2}}{\omega^{+}(\lambda)}
\\ \nonumber
\textrm{and} \\ &&\label{21}
h_{22}(\lambda)=\frac{1}{p^{-}\omega^{-}(\lambda)}\int_{a}^{c-}\varphi^{-}(y,\lambda)u(y)dy+\frac{u_{1}}{\omega^{-}(\lambda)}.
\end{eqnarray*}
Thus
\begin{eqnarray}\label{22}
Y(x,\lambda)=\left\{\begin{array}{c}
               \frac{\Delta_{34}\psi^{-}(x,\lambda)}{p^{-}\omega(\lambda)}\int_{a}^{x}\varphi^{-}(y,\lambda)u(y)dy +
\frac{\Delta_{34}\varphi^{-}(x,\lambda)}{p^{-}\omega(\lambda)}\int_{x}^{c-}\psi^{-}(y,\lambda)u(y)dy \\
+\frac{\Delta_{12}\varphi^{-}(x,\lambda)}{\omega(\lambda)}(\frac{1}{p^{+}}\int_{c+}^{b}\psi^{+}(y,\lambda)u(y)dy+u_{2})+\frac{\Delta_{34}u_{1}
\psi^{-}(x,\lambda)}{\omega(\lambda)}
 \ \ \ \ \ for \  x \in [a,c) \\
                \\
\frac{\Delta_{12}\psi^{+}(x,\lambda)}{p^{+}\omega(\lambda)}\int_{c+}^{x}\varphi^{+}(y,\lambda)u(y)dy
+\frac{\Delta_{12}\varphi^{+}(x,\lambda)}{p^{+}\omega(\lambda)}\int_{x}^{b}\psi^{+}(y,\lambda)u(y)dy\\
+
\frac{\Delta_{34}\psi^{+}(x,\lambda)}{\omega(\lambda)}(\frac{1}{p^{-}}\int_{a}^{c-}\varphi^{-}(y,\lambda)u(y)dy+u_{1})+\frac{\Delta_{12}u_{2}
\varphi^{+}(x,\lambda)}{\omega(\lambda)}\
\ \ \ \ \ for \  x \in (c,b] \\
             \end{array}\right.
\end{eqnarray}
Let us introduce the Green's function as
\begin{eqnarray}\label{23}
G_{1}(x,y;\lambda)=\left\{\begin{array}{c}
\frac{\varphi^{-}(x,\lambda)\psi^{-}(y,\lambda)}{\Delta_{34}\omega^{-}(\lambda)},
\ \ \ if \ x \in[a,c), \  \ y \in [a,x)\ \  \\ \\

\frac{\psi^{-}(x,\lambda)\varphi^{-}(y,\lambda)}{\Delta_{34}\omega^{-}(\lambda)},
\ \ \ if \ x \in[a,c), \  \ y  \in [x,c)\ \  \\ \\

\frac{\psi^{-}(x,\lambda)\varphi^{+}(y,\lambda)}{\Delta_{34}\omega^{-}(\lambda)}, \ \ \ \ \ if \ x \in[a,c), \  \ y \in (c,b]\ \    \\

\frac{\varphi^{+}(x,\lambda)\psi^{-}(y,\lambda)}{\Delta_{12}\omega^{+}(\lambda)},
\ \ \ if \ x \in(c,b], \ \ y \in [a,c)\ \   \\ \\

 \frac{\varphi^{+}(x,\lambda)\psi^{+}(y,\lambda)}{\Delta_{12}\omega^{+}(\lambda)}, \ \ \ if \ \ x \in(c,b], \ \ y \in (c,x]\ \
 \\ \\

\frac{\psi^{+}(x,\lambda)
\varphi^{+}(y,\lambda)}{\Delta_{12}\omega^{+}(\lambda)}, \ \ \ if \
\ x \in(c,b], \ \ y \in [x,b]\ \   \\ \\
  \end{array}\right.
\end{eqnarray}
Then from (\ref{22}) and (\ref{23}) it follows that the considered
problem (\ref{141}), (\ref{141v}) has an unique solution given by
\begin{eqnarray}\label{2.16}
Y(x,\lambda)&=& \Delta_{34}\int_{a}^{c-} G_{1}(x,y;\lambda)u(y)dy+
 \Delta_{12}\int_{c+}^{b} G_{1}(x,y;\lambda)u(y)dy
\nonumber\\&+&
\Delta_{34}u_{1}\frac{\psi(x,\lambda)}{\omega(\lambda)}+\Delta_{12}u_{2}\frac{\varphi(x,\lambda)}{\omega(\lambda)}
\end{eqnarray}
\begin{cor}\label{corn}The resolvent operator can be represented as
\begin{eqnarray*}\label{141ö}
(\lambda I-\mathcal{L})^{-1}U(x)=\left\{\begin{array}{c}
 \int\limits_{a}^{b}G_{1}(x,y;\lambda )u(y)dy+\Delta_{34}u_{1}\frac{\psi(x,\lambda)}{\omega(\lambda)}+\Delta_{12}u_{2}\frac{\varphi(x,\lambda)}{\omega(\lambda)}\\
                \\
 \ \  B'_{a}[u]
 \\
-B'_{b}[u] \\
             \end{array} \right\}
\end{eqnarray*}
\end{cor}
\begin{thm}
 The resolvent operator $R(\lambda,A)$  is compact.
\end{thm}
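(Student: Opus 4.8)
The plan is to read off the compactness of $R(\lambda,\mathcal{L})=(\lambda I-\mathcal{L})^{-1}$ directly from the explicit representation obtained in Corollary \ref{corn}. Fix $\lambda\in\rho(\mathcal{L})$; in particular $\lambda$ is not a zero of $\omega$, so $\omega^{\pm}(\lambda)\neq 0$ and the formula of Corollary \ref{corn} is available. Decomposing $\mathcal{H}=\mathcal{H}_{1}\oplus\mathbb{C}^{2}$, that formula shows that $R(\lambda,\mathcal{L})U$, for $U=(u(x),u_{1},u_{2})$, has function component $\int_{a}^{b}G_{1}(x,y;\lambda)u(y)\,dy+\Delta_{34}u_{1}\frac{\psi(x,\lambda)}{\omega(\lambda)}+\Delta_{12}u_{2}\frac{\varphi(x,\lambda)}{\omega(\lambda)}$ and two scalar components lying in the finite-dimensional factor $\mathbb{C}^{2}$. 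I would then write $R(\lambda,\mathcal{L})=\widetilde{R}+\widehat{R}$, where $\widetilde{R}U:=\big(\int_{a}^{b}G_{1}(x,y;\lambda)u(y)\,dy,\,0,\,0\big)$ is the pure integral-operator part and $\widehat{R}$ collects the remaining terms, and then prove that $\widetilde{R}$ is compact while $\widehat{R}$ has finite rank; since a sum of a compact operator and a finite-rank operator is compact, this finishes the proof.

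First I would show that $\widetilde{R}$ is a Hilbert--Schmidt operator. Since $\lambda$ is not an eigenvalue we have $\omega^{\pm}(\lambda)\neq 0$, and the basic solutions $\varphi^{\pm}(\cdot,\lambda)$, $\psi^{\pm}(\cdot,\lambda)$ solve the ordinary differential equation $(\ref{1})$ on the closed bounded intervals $[a,c]$ and $[c,b]$, hence are continuous and therefore bounded there. Inspecting the piecewise definition $(\ref{23})$, it follows that $G_{1}(\cdot,\cdot;\lambda)$ is a bounded measurable function on the bounded region $\big([a,c)\cup(c,b]\big)^{2}$, so that $\int_{a}^{b}\int_{a}^{b}|G_{1}(x,y;\lambda)|^{2}\,dx\,dy<\infty$. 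Consequently the integral operator with kernel $G_{1}(\cdot,\cdot;\lambda)$ is Hilbert--Schmidt on $\mathcal{H}_{1}=L_{2}[a,c)\oplus L_{2}(c,b]$ --- and this conclusion is insensitive to whether one uses the modified inner products or the standard ones, since they are equivalent --- hence compact; its trivial extension $\widetilde{R}$ to $\mathcal{H}$ is then compact as well.

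Second I would note that $\widehat{R}$ has finite rank: for every $U\in\mathcal{H}$ the function component of $\widehat{R}U$ equals $\Delta_{34}u_{1}\frac{\psi(x,\lambda)}{\omega(\lambda)}+\Delta_{12}u_{2}\frac{\varphi(x,\lambda)}{\omega(\lambda)}$, which lies in the two-dimensional subspace spanned by $\psi(\cdot,\lambda)$ and $\varphi(\cdot,\lambda)$, while the remaining two components lie in $\mathbb{C}^{2}$; hence the range of $\widehat{R}$ is contained in a subspace of dimension at most four, and $\widehat{R}$ (being bounded, as $\lambda\in\rho(\mathcal{L})$) is of finite rank. Therefore $R(\lambda,\mathcal{L})=\widetilde{R}+\widehat{R}$ is compact. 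I do not expect a genuine obstacle in this argument; the only point needing real verification is the square-integrability (indeed boundedness) of the Green's function $G_{1}(\cdot,\cdot;\lambda)$, which reduces to the continuity of the four basic solutions on the relevant closed bounded intervals together with $\omega^{\pm}(\lambda)\neq 0$, while everything else is the standard fact that compact plus finite rank is compact.
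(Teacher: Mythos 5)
Your argument is correct, and in fact it supplies something the paper does not: the proof environment for this theorem in the source is empty, so there is no proof of the authors' to compare yours against. Your route --- splitting $(\lambda I-\mathcal{L})^{-1}=\widetilde{R}+\widehat{R}$ with $\widetilde{R}$ the integral operator with kernel $G_{1}(\cdot,\cdot;\lambda)$ and $\widehat{R}$ the remainder, showing $\widetilde{R}$ is Hilbert--Schmidt because the four basic solutions are continuous on the closed subintervals and $\omega^{\pm}(\lambda)\neq 0$, and showing $\widehat{R}$ is bounded with range of dimension at most four --- is the standard and expected argument for operators of this type, and all the steps check out. Two small points are worth making explicit. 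First, the last two components of the resolvent as printed in Corollary \ref{corn} read $B'_{a}[u]$ and $-B'_{b}[u]$, which cannot be literally correct for a general $u\in L_{2}$ (they should be functionals of the solution $Y$, so that the image lies in $dom(\mathcal{L})$); your argument is insensitive to this, since either way those components take values in the two-dimensional factor $\mathbb{C}^{2}$. Second, your justification that $\widehat{R}$ is bounded leans on $\lambda\in\rho(\mathcal{L})$, i.e.\ on boundedness of the full resolvent, which for this paper rests on the (also unproved) self-adjointness theorem; if you want the compactness proof to be self-contained you can instead verify directly from the explicit formula that each term of $\widehat{R}$ is a bounded functional of $U$ multiplied by a fixed vector, which is immediate.
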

\begin{proof}
\end{proof}
\begin{thm}\label{th3.1}(i) The modified Parseval equality
\begin{eqnarray}\label{a1}
\Delta_{12}  \ \int_{a}^{c} f^{2}(x)dx + \Delta_{34} \int_{c}^{b}
f^{2}(x)dx&=&\sum_{n=0}^{\infty}\mid
\Delta_{12} \ \int_{a}^{c} f(x)\psi_{n}(x)dx \nonumber\\
&+& \Delta_{34} \int_{c}^{b}f(x)\psi_{n}(x)dx\mid^{2}
\end{eqnarray}
is  hold for each $f \in L_{2}[a,c] \oplus L_{2}[c,b].$

\end{thm}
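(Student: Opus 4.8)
The plan is to derive the modified Parseval equality (\ref{a1}) from the spectral decomposition of the operator $\mathcal{L}$, exploiting that $\mathcal{L}$ is self-adjoint in $\mathcal{H}$ and that its resolvent is compact. Indeed, since $\mathcal{L}=\mathcal{L}^{*}$ and $(\lambda I-\mathcal{L})^{-1}$ is compact (established above), the spectrum of $\mathcal{L}$ consists only of the real isolated eigenvalues $\{\lambda_{n}\}_{n\ge 0}$, which by the preceding sections coincide with the zeros of $\omega(\lambda)$, and the associated eigenvectors form an orthonormal basis of $(\mathcal{H},[\cdot,\cdot]_{\mathcal{H}})$. By Theorem~\ref{thm1} each $\lambda_{n}$ is a simple zero of $\omega$, and by Lemma~\ref{bag} the eigenspace at $\lambda_{n}$ is one-dimensional, spanned by $\Phi_{n}:=(\psi_{n}(x),\,B'_{a}[\psi_{n}],\,-B'_{b}[\psi_{n}])$ with $\psi_{n}:=\psi(\cdot,\lambda_{n})$; after normalizing so that $[\Phi_{n},\Phi_{n}]_{\mathcal{H}}=1$, the family $\{\Phi_{n}\}_{n\ge 0}$ is a complete orthonormal system in $\mathcal{H}$.

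Next I would specialize the abstract Parseval identity. For $f\in L_{2}[a,c]\oplus L_{2}[c,b]=\mathcal{H}_{1}$ set $F:=(f,0,0)\in\mathcal{H}$; then $[F,F]_{\mathcal{H}}=\sum_{n=0}^{\infty}\bigl|[F,\Phi_{n}]_{\mathcal{H}}\bigr|^{2}$. Since the last two coordinates of $F$ vanish, the $\mathbb{C}^{2}$-terms in (\ref{ic}) drop out, so $[F,F]_{\mathcal{H}}=[f,f]_{\mathcal{H}_{1}}$ and $[F,\Phi_{n}]_{\mathcal{H}}=[f,\psi_{n}]_{\mathcal{H}_{1}}$. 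Inserting the explicit form of $[\cdot,\cdot]_{\mathcal{H}_{1}}$ and collecting the weights $\Delta_{12}/p^{-}$ and $\Delta_{34}/p^{+}$ (which amounts to the normalization absorbing the factors $p^{\mp}$ into $\psi_{n}$) converts this identity into (\ref{a1}); for real-valued $f$ the conjugations and moduli are superfluous, giving the stated form.

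The only genuinely substantial ingredient is the completeness of $\{\Phi_{n}\}$ in $\mathcal{H}$, and this is already secured by the self-adjointness and compact resolvent of $\mathcal{L}$; everything else is bookkeeping of the weights of the modified inner product and the identification of the first-component projection. A more self-contained alternative, avoiding the abstract spectral theorem, would be contour integration: the kernel of $(\lambda I-\mathcal{L})^{-1}$ is the Green's function $G_{1}(x,y;\lambda)$ of (\ref{23}), and integrating $(\lambda I-\mathcal{L})^{-1}F$ over expanding circles $|\lambda|=R_{N}$ that enclose $\lambda_{0},\dots,\lambda_{N}$ while avoiding the spectrum, the residues at the simple poles $\lambda_{n}$ produce exactly the partial sums of the expansion $F=\sum_{n}[F,\Phi_{n}]_{\mathcal{H}}\Phi_{n}$; the resolvent bound $\|(\lambda I-\mathcal{L})^{-1}\|=O(|\lambda|^{-1})$ off the spectrum then forces convergence, and (\ref{a1}) follows once more by taking $F=(f,0,0)$. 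In either approach the delicate point is precisely this completeness / resolvent-decay estimate, while the passage to (\ref{a1}) itself is routine.
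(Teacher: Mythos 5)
The paper itself leaves this theorem without proof, so there is nothing to compare against; judged on its own, your argument is the standard and essentially correct one. You correctly identify the crux — completeness of the eigenvectors $\Phi_{n}=(\psi_{n},B'_{a}[\psi_{n}],-B'_{b}[\psi_{n}])$ in $\mathcal{H}$, which follows from the self-adjointness and compact-resolvent theorems stated earlier in the paper (themselves unproved there, but you are entitled to cite them) — and the specialization to $F=(f,0,0)$, which kills the $\mathbb{C}^{2}$-components of the inner product (\ref{ic}) and turns the abstract Parseval identity $[F,F]_{\mathcal{H}}=\sum_{n}\bigl|[F,\Phi_{n}]_{\mathcal{H}}\bigr|^{2}$ into a statement purely about $[\cdot,\cdot]_{\mathcal{H}_{1}}$, is exactly the right move; note that $F=(f,0,0)$ need not lie in $dom(\mathcal{L})$, but Parseval only needs $F\in\mathcal{H}$, so this is harmless.

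The one point where your bookkeeping does not actually close is the claim that "the normalization absorbs the factors $p^{\mp}$ into $\psi_{n}$." What your argument literally yields is
\begin{equation*}
\frac{\Delta_{12}}{p^{-}}\int_{a}^{c}f^{2}\,dx+\frac{\Delta_{34}}{p^{+}}\int_{c}^{b}f^{2}\,dx=\sum_{n=0}^{\infty}\Bigl|\frac{\Delta_{12}}{p^{-}}\int_{a}^{c}f\psi_{n}\,dx+\frac{\Delta_{34}}{p^{+}}\int_{c}^{b}f\psi_{n}\,dx\Bigr|^{2},
\end{equation*}
with $\Phi_{n}$ normalized in $\mathcal{H}$. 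Rescaling $\psi_{n}$ changes only the right-hand side; it cannot remove the $1/p^{\mp}$ from the left-hand side, whose weights are fixed by the inner product. When $p^{-}=p^{+}=p$ the whole identity carries a common factor $1/p$ on the left and $1/p^{2}$ inside the sum, and replacing $\psi_{n}$ by $\psi_{n}/\sqrt{p}$ does produce (\ref{a1}) exactly; but when $p^{-}\neq p^{+}$ the weight functions on the two intervals genuinely differ from those in (\ref{a1}), and the identity as printed does not follow from Parseval in $(\mathcal{H},[\cdot,\cdot]_{\mathcal{H}})$. This is best read as an imprecision in the theorem's statement (the paper silently drops the $p^{\mp}$ that its own inner product carries) rather than a flaw in your method, but your proof should either state the identity with the correct weights or restrict to $p^{-}=p^{+}$ rather than assert that normalization alone repairs it.
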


\begin{proof}
\end{proof}

\begin{thm}\label{thm3.2} Let $(f(x), (f)_{\beta}^{'}) \in
D(A)$. Then
\begin{eqnarray}\label{3.16} (i) \ \
 f(x)&=& \sum_{n=0}^{\infty}\big(\Delta_{12}  \
\int_{a}^{c} f(x)\psi_{n}(x)dx  + \Delta_{34}
\int_{c}^{b}f(x)\psi_{n}(x)dx\nonumber\\
&+&\frac{\Delta_{12}}{p^{-}\theta_{1}}f_{1}\overline{g_{1}}(\psi_{n})_{\beta}^{'}
\big)\psi_{n}(x)+\frac{\Delta_{34}}{p^{+}\theta_{2}}f_{2}\overline{g_{2}}
(\psi_{n})_{\beta}^{'} \big)\psi_{n}(x) \end{eqnarray} where, the
series converges absolutely and uniformly in whole $[a,c)\cup(c,b].$
(ii) The series (\ref{3.16}) may also be differentiated, the
differentiated series also being absolutely and uniformly convergent
in whole $[a,c)\cup(c,b].$
 \end{thm}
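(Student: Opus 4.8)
The plan is to bootstrap from the abstract spectral information already in hand — $\mathcal{L}$ is self-adjoint with compact resolvent (both established above), so its eigenvectors $\Psi_{n}=(\psi_{n}(x),B'_{a}[\psi_{n}],-B'_{b}[\psi_{n}])$ form an orthonormal basis of $\mathcal{H}$ and $|\lambda_{n}|\to\infty$ — together with the explicit Green's function (\ref{23}) that generates the resolvent. First I would fix a real $\lambda_{0}$ that is not an eigenvalue and write $R:=(\lambda_{0}I-\mathcal{L})^{-1}$; by Corollary \ref{corn} this $R$ acts as an integral operator whose kernel is $G_{1}(\cdot,\cdot;\lambda_{0})$ plus the rank-two ``boundary'' part. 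For $F=(f,f_{1},f_{2})\in dom(\mathcal{L})$ put $G:=(\lambda_{0}I-\mathcal{L})F\in\mathcal{H}$, so that $F=RG$; since $R\Psi_{n}=(\lambda_{0}-\lambda_{n})^{-1}\Psi_{n}$ and $R$ is self-adjoint, the generalized Fourier coefficients obey
\[
a_{n}(F):=[F,\Psi_{n}]_{\mathcal{H}}=[RG,\Psi_{n}]_{\mathcal{H}}=\frac{[G,\Psi_{n}]_{\mathcal{H}}}{\lambda_{0}-\lambda_{n}}=\frac{a_{n}(G)}{\lambda_{0}-\lambda_{n}},\qquad \sum_{n}|a_{n}(G)|^{2}=\|G\|_{\mathcal{H}}^{2}<\infty.
\]
Unwinding $[F,\Psi_{n}]_{\mathcal{H}}$ with the modified inner product (\ref{ic}) reproduces exactly the bracketed coefficient appearing in (\ref{3.16}); carrying the weights $\Delta_{12},\Delta_{34},p^{\pm},\theta_{i}$ through this identification is part of the bookkeeping.

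The heart of the matter is a Bessel-type bound for the eigenfunctions, obtained by applying Parseval to the kernel itself. For fixed $x\in[a,c)\cup(c,b]$ I would let $\mathcal{G}_{x}\in\mathcal{H}$ be the element whose $L_{2}$-component is $y\mapsto G_{1}(x,y;\lambda_{0})$ (appropriately rescaled so as to absorb the weights in (\ref{ic})) and whose $\mathbb{C}^{2}$-components are the corresponding boundary values from Corollary \ref{corn}; then evaluating the first component of $R\Psi_{n}$ and matching it against (\ref{ic}) shows $[\mathcal{G}_{x},\Psi_{n}]_{\mathcal{H}}=\psi_{n}(x)/(\lambda_{0}-\lambda_{n})$, so Bessel's inequality gives $\sum_{n}|\psi_{n}(x)|^{2}/|\lambda_{0}-\lambda_{n}|^{2}\le\|\mathcal{G}_{x}\|_{\mathcal{H}}^{2}$. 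Because $\varphi^{\pm},\psi^{\pm}$ are continuous in $x$ and $\omega^{\pm}(\lambda_{0})\neq0$, the explicit formula (\ref{23}) makes $G_{1}(x,y;\lambda_{0})$ bounded on $[a,b]\times[a,b]$, hence $M:=\sup_{x}\|\mathcal{G}_{x}\|_{\mathcal{H}}<\infty$. Cauchy--Schwarz then yields
\[
\sum_{n=0}^{\infty}|a_{n}(F)|\,|\psi_{n}(x)|\le\Big(\sum_{n}|a_{n}(G)|^{2}\Big)^{1/2}\Big(\sum_{n}\frac{|\psi_{n}(x)|^{2}}{|\lambda_{0}-\lambda_{n}|^{2}}\Big)^{1/2}\le M\,\|G\|_{\mathcal{H}}
\]
uniformly in $x$, so the series in (\ref{3.16}) converges absolutely and uniformly on $[a,c)\cup(c,b]$. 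To identify its sum with $f$, note that $\sum_{n}a_{n}(F)\Psi_{n}=F$ in $\mathcal{H}$ (completeness of the eigenbasis, cf. Theorem \ref{th3.1}), so the first components converge to $f$ in $L_{2}$; the uniform limit, being continuous on each of $[a,c)$ and $(c,b]$ and equal to $f$ a.e. there, must coincide with $f$. This gives (i).

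For (ii) I would run the same scheme with $\psi_{n}(x)$ replaced by $\psi_{n}'(x)$. Differentiating the first component of $R\Psi_{n}$ in $x$ gives $\psi_{n}'(x)/(\lambda_{0}-\lambda_{n})=[\partial_{x}\mathcal{G}_{x},\Psi_{n}]_{\mathcal{H}}$, and from (\ref{23}) the kernel $\partial_{x}G_{1}(x,y;\lambda_{0})$ is again piecewise continuous and bounded on $[a,b]\times[a,b]$ (it carries only the standard unit jump across $y=x$), so $M':=\sup_{x}\|\partial_{x}\mathcal{G}_{x}\|_{\mathcal{H}}<\infty$. The same Bessel/Cauchy--Schwarz estimate then bounds $\sum_{n}|a_{n}(F)|\,|\psi_{n}'(x)|\le M'\|G\|_{\mathcal{H}}$ uniformly in $x$, i.e. the formally differentiated series converges absolutely and uniformly on $[a,c)\cup(c,b]$. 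Since by (i) $\sum_{n}a_{n}(F)\psi_{n}\to f$ uniformly and the differentiated series converges uniformly, the classical term-by-term differentiation theorem gives $f'(x)=\sum_{n}a_{n}(F)\psi_{n}'(x)$ on $[a,c)\cup(c,b]$, which is (ii).

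The step I expect to be the real work is not conceptual but is the verification that $\mathcal{G}_{x}$ and $\partial_{x}\mathcal{G}_{x}$ genuinely lie in $\mathcal{H}$ with norms bounded uniformly in $x$, and that the abstract coefficient $[F,\Psi_{n}]_{\mathcal{H}}$ — once the weights of (\ref{ic}) are unwound — matches the bracketed expression in (\ref{3.16}) on the nose; this is exactly where the explicit form (\ref{23}), the non-vanishing $\omega^{\pm}(\lambda_{0})\neq0$, and the continuous $x$-dependence of the basic solutions $\varphi^{\pm},\psi^{\pm}$ are indispensable. A secondary point of care is that the two subintervals $[a,c)$ and $(c,b]$ must be treated separately throughout, since the eigenfunctions are genuinely discontinuous at $x=c$.
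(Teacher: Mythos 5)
The paper's own proof of this theorem is left empty, so there is nothing to compare your argument against; judged on its own, your proposal is the standard and essentially correct route for expansion theorems of this type (the Green's-function/Bessel-inequality argument of Titchmarsh and Levitan--Sargsyan, transplanted to the operator-theoretic setting of Section 4). Everything you need is at least asserted in the paper: self-adjointness and compactness of the resolvent give the orthonormal eigenbasis $\Psi_n$ of $\mathcal{H}$, and Corollary \ref{corn} gives the kernel representation of $R=(\lambda_0 I-\mathcal{L})^{-1}$. A few points that you correctly flag as "the real work" do need to be carried out rather than asserted: (a) for \emph{uniform} convergence you need uniform smallness of the tails, i.e. $\sum_{n\ge N}|a_n(F)||\psi_n(x)|\le M\big(\sum_{n\ge N}|a_n(G)|^2\big)^{1/2}\to 0$ as $N\to\infty$ independently of $x$ — your displayed inequality bounds only the full sum, though the same Cauchy--Schwarz applied to the tail gives exactly this; (b) the identity $[\mathcal{G}_x,\Psi_n]_{\mathcal{H}}=\psi_n(x)/(\lambda_0-\lambda_n)$ uses that the first component of $R\Psi_n$, which a priori equals $(\lambda_0-\lambda_n)^{-1}\psi_n$ only almost everywhere, is continuous on each of $[a,c)$ and $(c,b]$, which is where the explicit formula (\ref{23}) and the continuity of $\varphi^{\pm},\psi^{\pm}$ enter; (c) in part (ii) the differentiation of $x\mapsto\int G_1(x,y;\lambda_0)u(y)\,dy$ across the diagonal must be justified — the boundary terms produced by the two integrals cancel because $G_1$ is continuous across $y=x$, leaving the bounded (though discontinuous) kernel $\partial_xG_1$. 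Finally, the coefficient printed in (\ref{3.16}) is garbled ($f_1\overline{g_1}$ in place of $f_1$ times the boundary component of $\Psi_n$, mismatched brackets); your reading of it as the full inner product $[F,\Psi_n]_{\mathcal{H}}$ with respect to (\ref{ic}) is the only one under which the statement is true, and is consistent with Theorem \ref{th3.1}.
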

 \begin{proof}
\end{proof}

 \textbf{Example.} Consider the following simple case of the BVTP's
$(\ref{1})-(\ref{3})$
\begin{equation}\label{ex}
-y^{\prime \prime }(x)=\lambda y(x)
\end{equation}
\begin{equation}\label{ex1}
y(-1)+\lambda y'(-1))=0,
\end{equation}
\begin{equation}\label{ex2}
\lambda y(1)+y'(1))=0,
\end{equation}
\begin{equation}\label{ex3}
y'(0-)=y(+0), \  \ y'(-0)=2y'(+0)
\end{equation}\\
\begin{figure}
  \includegraphics[height=5cm,width=15cm]{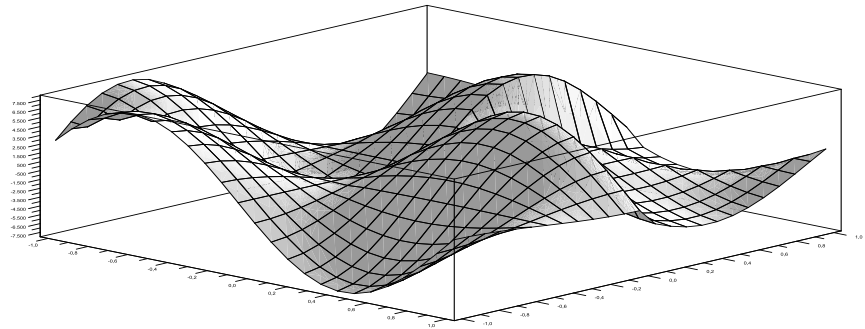}
  \caption{The graph of the Green's function $G(x,t,\mu)$ for $\mu=3$ }\label{f1}
\end{figure}
\begin{figure}
  \includegraphics[height=5cm,width=15cm]{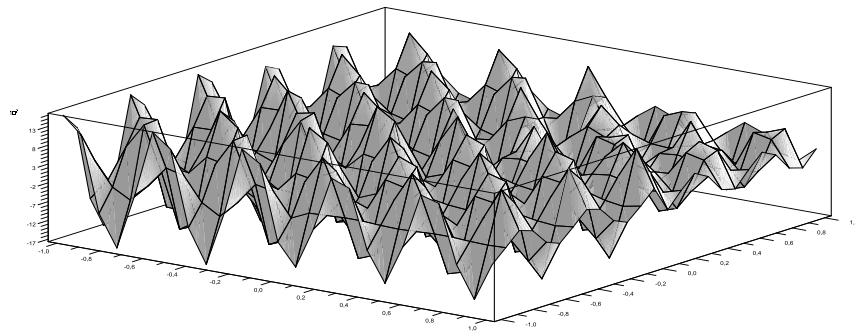}
  \caption{The graph of the Green's function $G(x,t,\mu)$ for $\mu=15$}\label{f2}
\end{figure}
The graph of the Green's function is displayed in Figure 1 and
Figure 2 for two different values of spectral parameter.

\end{document}